\newtheorem{theorem}{Theorem}[section]
\newtheorem{corollary}[theorem]{Corollary}
\newtheorem{lemma}[theorem]{Lemma}
\newtheorem{proposition}[theorem]{Proposition}
\newtheorem{conjecture}[theorem]{Conjecture}
\theoremstyle{definition}
\newtheorem{remark}[theorem]{Remark}
\newtheorem{example}[theorem]{Example}
\numberwithin{equation}{section}
\newcommand{\beq}{\begin{equation}}  
\newcommand{\eeq}{\end{equation}}  
\newcommand{\bear}{\begin{array}}  
\newcommand{\eear}{\end{array}} 
\newcommand\la{{\lambda}}   
\newcommand\La{{\Lambda}}  
\newcommand\al{{\alpha}}   
\newcommand\bet{{\beta}}   
\newcommand\gam{{\gamma}}   
\newcommand\eps{{\epsilon}}    
\newcommand{\Z}{\mathbb{Z}}
\newcommand{\Zp}{\mathbb{Z}_{>0}}
\newcommand{\Q}{\mathbb{Q}}
\newcommand{\R}{\mathbb{R}}
\begin{document}


\baselineskip=17pt

\title[Continued fractions for strong Engel series with signs]{Continued fractions for strong Engel series 
and L\"{u}roth series with signs} 
\author[A. N. W. Hone]{Andrew N. W. Hone}
\address{School of Mathematics, Statistics and Actuarial Science\\  University of Kent\\ 
Canterbury CT2 7FS, UK}
\email{A.N.W.Hone@kent.ac.uk}

\author[J. L. Varona]{Juan Luis Varona}
\address{Departamento de Matem\'aticas y Computaci\'on\\  
Universidad de La Rioja\\ 26006 Logro\~no, Spain}
\email{jvarona@unirioja.es}

\date{} 

\keywords{Continued fractions, Engel series, Pierce series, irrationality degree}

\begin{abstract} 
An Engel series is a sum of 
reciprocals $\sum_{j\geq 1} 1/x_j$ of a non-decreasing sequence of positive integers $x_n$
with the property that $x_n$ divides $x_{n+1}$ for all $n\geq 1$. In previous work, 
we have shown that for any Engel series with the stronger property 
that $x_n^2$ divides $x_{n+1}$, the continued fraction expansion of the sum 
is determined explicitly in terms of $z_1=x_1$ and the ratios $z_n=x_n/x_{n-1}^2$ for $n\geq 2$. Here we 
show that, when this stronger property holds, 
the same is true for a sum $\sum_{j\geq 1}\eps_j/x_j$ 
with an arbitrary 
sequence of signs $\eps_j=\pm 1$. 
As an application, we use this result to provide explicit continued fractions for 
particular families of L\"{u}roth series and alternating L\"{u}roth series defined by nonlinear recurrences of second order.  
We also calculate exact irrationality exponents for certain families of transcendental numbers 
defined by such series. 
\end{abstract}

\subjclass[2010]{Primary: 11J70; Secondary: 11B37, 11J81}

\keywords{Continued fraction, Engel series, L\"uroth series} 

\maketitle 

\section{Introduction} 
\label{sec:intro}

Given a sequence of positive integers $(x_n)$, which is such that $x_n \mid x_{n+1}$ 
 for all $n\geq1$, the sum of the reciprocals is the Engel series
\begin{equation}
\label{eq:Engel}
 S = \sum_{j=1}^{\infty} \frac{1}{x_j}
\end{equation}
and the alternating sum of the reciprocals is the Pierce series
\begin{equation}
\label{eq:Pierce}
  S' = \sum_{j=1}^{\infty} \frac{(-1)^{j+1}}{x_j}.
\end{equation}
(To ensure convergence it should be assumed that $(x_n)$ is eventually increasing, i.e.\ 
for all $n$ there is some $n'>n$ with $x_{n'}>x_n$.) 
Every positive real number admits both an Engel expansion, of the 
form \eqref{eq:Engel}, and a Pierce expansion \eqref{eq:Pierce} \cite{duverney, erdosshallit}, and these 
$S,S'$ are unique: 
after removing the integer part it is sufficient to consider numbers in the interval $(0,1)$, and then $(x_n)$ is strictly increasing with $x_1\geq 2$ 
in (\ref{eq:Engel}) and $x_1\geq 1$ in (\ref{eq:Pierce}). Although they are not quite so well 
known, Engel expansions and Pierce expansions have much in common with 
continued fraction expansions, both in the way that they are determined recursively, and from a 
metrical point of view; for instance, see \cite{erdosetal} for the case of Engel series and \cite{shallitPierce} for Pierce series. 

In recent work \cite{Hone1}, 
the first author presented a family of sequences $(x_n)$ generated by certain nonlinear recurrences of second order, of the form 
\beq\label{2nd} 
x_{n+2}x_{n} = x_{n+1}^2 \,\big(1+x_{n+1}\,G(x_{n+1})\big), \qquad n\geq 1, \qquad G(x)\in \Z [x], 
\eeq 
where $G(x)>0$ for $x>0$, 
such 
that the corresponding Engel series~\eqref{eq:Engel} yields a transcendental number 
with a regular 
continued fraction expansion 
$$ 
S = 
  [a_0;a_1,a_2,a_3, \dots] = a_0 + \cfrac{1}{ a_1 + \cfrac{1}{ a_2 + \cfrac{1}{ a_3+ \cdots } } }
$$
whose coefficients (partial quotients) $a_0,a_1,\ldots$ 
are explicitly given in terms of the~$x_n$. 

More recently \cite{Var}, the second 
author proved that, when the sequence $(x_n)$ is generated by the same sort of nonlinear recurrence 
as (\ref{2nd}), 
an analogous result holds for the associated Pierce series~\eqref{eq:Pierce}, 
although the structure of the corresponding continued fractions is different. 
It was subsequently noted in \cite{Hone3, HoneVar} that 
the recurrence 
(\ref{2nd}) could be further generalized, and at the same time allow the 
explicit continued fraction expansion to be determined for the sum 
of an arbitrary rational number $r=p/q$ and an Engel series, that is 
\beq\label{pqengel} 
\frac{p}{q} + \sum_{j=2}^\infty \frac{1}{x_j}, \qquad 
\text{with}\,\,\, q=x_1 \mid x_2, 
\eeq  
and similarly for the case of $p/q$ $\pm$ a Pierce series.
 
The key to the results in \cite{Hone1, Hone3, HoneVar} and \cite{Var} was that, 
subject to a recurrence like (\ref{2nd}), the truncation of the particular series 
(\ref{eq:Engel}) or (\ref{eq:Pierce}) 
at the $n$th term yields 
a convergent of the continued fraction of $S$ or $S'$, whose length depends linearly on $n$. 
Continuing in this vein, Duverney et al.\ showed in \cite{DKS1} that a finite sum 
\beq\label{yxsum}
\sum_{j=1}^n \frac{y_j}{x_j} 
\eeq 
can be expanded as a continued fraction of general type, of length $2n$, i.e.\
\beq\label{dks} 
 \cfrac{a_1}{ b_1 + \cfrac{a_2}{ b_2 + \cfrac{a_3}{ \cdots+\cfrac{a_{2n}}{b_{2n}} } } }, 
\eeq  
where $a_j,b_j$ are explicit rational functions of the indeterminates $x_j,y_j$, and they presented 
a similar formula for the alternating sum $\sum_{j=1}^n (-1)^{j-1} y_j/x_j$ as a general 
continued fraction of length $3n-4$. 

The formulae for the continued fraction (\ref{dks}) are conveniently 
written in terms of the ``exponentiated shift'' operator $\theta$ from \cite{DKS1}, defined by 
\beq\label{eshift} 
\theta [u_n]=\frac{u_{n+1}}{u_n}, 
\eeq 
and if it is assumed that $(x_n)$ is an increasing sequence of positive integers with
$x_1\geq 2$ and $(y_n)$ is another sequence of positive integers, then  
(writing $\theta^2 [ x_n]=\theta[\theta [ x_n]]$) 
the recurrence 
\beq\label{dksrec} 
\theta^2 [x_n]-\theta^2[y_n]=\alpha_{n} x_n, 
\eeq 
for an arbitrary sequence $(\alpha_n)$ 
consisting of positive integers, provides a natural 
generalization of (\ref{2nd}), with the results on Engel and Pierce series in \cite{Hone1, Hone3, Var} 
corresponding to the special case $y_n=1$ for all~$n$. Moreover, the irrationality exponents of transcendental 
numbers given by suitable Engel and Pierce series were explicitly calculated in \cite{HoneVar}; and 
in \cite{DKS2} (based on a result from \cite{DS}) 
this was further extended to find the irrationality exponents of the limits 
$n\to\infty$ for more general series (\ref{yxsum}), subject to the recurrence 
(\ref{dksrec}) with appropriate assumptions on the growth of the sequences 
$(x_n)$, $(y_n)$ and~$(\al_n)$. 
 
In a separate development \cite{Hone2}, the first author showed that if the denominators 
in an Engel series satisfy the stronger divisibility property 
\beq\label{strong} 
x_n^2 \mid x_{n+1}, \qquad n\geq 1, 
\eeq 
then the continued fraction expansion of (\ref{eq:Engel}) 
can be written explicitly in terms of 
the integers $z_j$ defined by 
\beq\label{zdef}
z_1=x_1, \qquad z_{j+1}=\frac{x_{j+1}}{x_j^2} \in \Zp, \quad j\geq 1. 
\eeq 
Henceforth we refer to a series (\ref{eq:Engel}) with the property (\ref{strong}) as a 
strong Engel series. 

Included in this class of strong Engel series is the set of numbers 
\beq\label{kempner} 
\sum_{n=0}^\infty \frac{1}{u^{2^n}} 
\eeq 
for integer $u\geq 2$, with the case $u=2$ being known as the Kempner number \cite{adamc}. All of these numbers 
are transcendental, with irrationality exponent $2$ \cite{bugeaudetal}; 
their 
continued fraction expansions were found in 
recursive form in \cite{shallit1}, with a non-recursive representation 
described in \cite{shallit2}, and further generalizations with a 
similar folded recursive structure were given in \cite{shallit3} and later \cite{folded}. The series that are treated 
in the latter works all depend on a single integer parameter, e.g.\ the integer $u$ in (\ref{kempner}), 
whereas strong Engel series and their generalizations to be considered in the sequel depend 
on the infinite set of parameters $z_j$.

In the next section we generalize the results of \cite{Hone2} to the case of a series 
\beq\label{rndsign} 
\sum_{j=1}^\infty \frac{\eps_j}{x_j}, \qquad \eps_j=\pm 1, 
\eeq 
where the denominators satisfy the strong Engel property (\ref{strong}) and the sequence of signs 
$\eps_j$ is arbitrary. In fact, using the same approach as in \cite{HoneVar} we provide the explicit continued 
fraction expansion for a sum of the form $p/q$ $+$ a strong Engel series. In the third section we give an application of these results 
to a family of L\"uroth series, that is series of the type 
\beq\label{luroth} 
\frac{1}{u_1} +\sum_{j=2}^{\infty} \frac{1}{u_{1}(u_{1}-1) \cdots 
  u_{j-1}(u_{j-1}-1) u_{j}}, 
\eeq 
where we impose the condition that the sequence $(u_n)$ satisfies a nonlinear recurrence of second order analogous to (\ref{2nd}). We also 
consider one of the alternating analogues of L\"uroth series introduced in \cite{kkk}, and 
other generalizations along similar lines. The final section is mostly devoted to calculating exact irrationality exponents for 
certain families of series of generalized L\"uroth type, 
defined by particular recurrences of second order for~$u_n$. 
Inspired by \cite{DKS2}, these recurrences are given in terms of ``pseudo-polynomials'' with arbitrary real exponents 
(see (\ref{alform}) below), 
rather than polynomials like $G$ in (\ref{2nd}). 
We conclude with a  
conjecture about transcendence 
of strong Engel series with signs.

\section{Some explicit continued fractions} 
\label{sec:ecf}

To fix our notation, we briefly recall some standard facts about continued fractions. 
In what follows, we denote a finite regular continued fraction by 
\beq\label{ficf}
  [a_0;a_1,a_2,\dots,a_n] 
  = a_0 + \cfrac{1}{ a_1 + \cfrac{1}{ a_2 + \cfrac{1}{ \cdots +\cfrac{1}{a_n}} } }
  = \frac{p_n}{q_n},
\eeq
where $a_0 \in \Z$, $a_j \in \Zp$ and the convergent $p_n/q_n$ is in lowest terms with $q_n>0$. 
We define the length of (\ref{ficf}) to be the index of the final partial quotient, 
written as 
$$ 
\ell (  [a_0;a_1,a_2,\dots,a_n] )=n; 
$$ 
so we ignore the integer part $a_0$ when counting the length.
Every $r\in\Q$ 
can be written as a finite continued fraction (\ref{ficf}), although 
this representation is not unique (there is both an odd and an even length representation), 
but each $\xi\in\R\setminus\Q$ 
is given uniquely by an infinite continued fraction with convergents $p_n/q_n$ 
of the form (\ref{ficf}), that is (with $a_0=\left \lfloor{\xi}\right \rfloor $) 
\beq\label{al}
\xi = 
  [a_0;a_1,a_2,\dots] = \lim_{n\to\infty} [a_0;a_1,a_2,\dots,a_n] = \lim_{n\to\infty}\frac{p_n}{q_n}.
\eeq 

The three-term recurrence relation satisfied by the numerators and denominators of the convergents 
is equivalent to the matrix relation 
\beq\label{mat} 
\left(\bear{cc} p_{n+1} & p_n \\ 
q_{n+1} & q_n 
\eear\right) = 
\left(\bear{cc} p_n & p_{n-1}\\ 
q_n & q_{n-1}
\eear\right)\left(\bear{cc} a_{n+1} & 1 \\ 
1 & 0
\eear\right), 
\eeq 
for all $n\geq -1$, with
$$ 
\left(\bear{cc} p_{-1} & p_{-2} \\ 
q_{-1} & q_{-2} 
\eear\right) = \left(\bear{cc} 1 & 0 \\ 
0 & 1
\eear\right).
$$
Taking determinants in (\ref{mat}) yields the standard 
identity 
\beq\label{detid} 
  p_j q_{j-1} - p_{j-1} q_j = (-1)^{j-1}, \qquad j \ge -1. 
\eeq

Given a finite continued fraction (\ref{ficf}), 
written as 
$[a_0; {\bf a}]$, where ${\bf a}=(a_1,a_2,\ldots,a_n)$ is the word of length $n$ 
defining the fractional part, let ${\bf a}^R=(a_n,a_{n-1},\ldots,a_1)$ denote the 
reversed word, and introduce the modified word of length $n+1$ given by 
$$ 
\tilde{{\bf a}}=(a_1,a_2, \ldots, a_{n-1},a_n-1,1), 
$$ 
together with its reversal $\tilde{{\bf a}}^R$. 
Then it is 
convenient to define the following two families of transformations, labelled by a parameter $z$: 
\beq\label{zfam} 
\varphi_z^{(+1)} : \, [a_0; {\bf a}] \mapsto [a_0; {\bf a}, z-1, \tilde{{\bf a}}^R], 
\quad 
\varphi_z^{(-1)} : \, [a_0; {\bf a}] \mapsto [a_0; \tilde{{\bf a}}, z-1, {\bf a}^R]. 
\eeq 
These operators are analogous to the folding maps employed in \cite{folded}; with a slightly different 
notation, the operator $\varphi_z^{(+1)}$ was defined previously in \cite{HoneVar}.  
For words of length zero, the action of these operators is defined by 
\beq\label{zerolength} 
\varphi_{z}^{(+1)} ( [a_0]) = [a_0; z-1,1], 
\quad 
\varphi_{z}^{(-1)} ( [a_0] ) = [a_0-1; 1,z-1]. 
\eeq
(In what follows, we will sometimes omit the subscript $z$, but the implicit dependence on a parameter 
$z$ should be understood.) 

For each $z$, starting from a continued fraction of length 
$\ell ([a_0; {\bf a}]) =n$, 
each of the operators $\varphi_z^{(\pm 1)}$ generically produces a new continued fraction of length 
$\ell\big(\varphi_z^{(\pm 1)}([a_0;{\bf a}])\big)=2n+2$. 
However, if it happens that $z=1$ or $a_n=1$ in (\ref{zfam}), then a zero coefficient will appear 
in the continued fraction obtained by applying one of these operators, and so 
in order to obtain only positive partial quotients 
one must use the concatenation operation 
\beq\label{concat} 
[\ldots, A, 0, B, \ldots] \mapsto [\ldots, A+B, \ldots]
\eeq 
(see e.g.\ Proposition~3 in \cite{folded}), which reduces the length by~$2$. 
Henceforth we will assume that, whenever this occurs, the action of $\varphi_z^{(\pm1)}$ is understood 
as producing the result of concatenation of any zero that appears. 

Our interest in the above transformations is due to 
\begin{lemma}\label{trans} 
\beq\label{pqsum} 
\frac{p_n}{q_n}\pm \frac{(-1)^n}{zq_n^2} = \varphi_z^{(\pm 1)}([a_0;{\bf a}]). 
\eeq 
\end{lemma}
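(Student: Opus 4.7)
The identity \eqref{pqsum} is a signed refinement of the classical folding lemma, and my plan is to establish it in three stages. First, I would prove the \emph{signed folding identity}
$$
[a_0; a_1, \ldots, a_n, y, -a_n, -a_{n-1}, \ldots, -a_1] = \frac{p_n}{q_n} + \frac{(-1)^n}{y q_n^2}
$$
for an arbitrary non-zero parameter $y$. The cleanest route is matrix-theoretic: with $M_a = \left(\begin{smallmatrix} a & 1 \\ 1 & 0 \end{smallmatrix}\right)$ as in \eqref{mat}, the key step is the conjugation $M_{-a} = J^{-1} M_a J$ with $J = \mathrm{diag}(1,-1)$, combined with the transpose--reversal identity $(M_{a_1}\cdots M_{a_n})^T = M_{a_n}\cdots M_{a_1}$. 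Together these telescope the negative tail into a conjugate of the transpose of the initial block; extracting the ratio of top to bottom entries and invoking the determinant identity \eqref{detid} produces the claimed sum.

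Second, for the $+1$ case of \eqref{pqsum} I would set $y = z$ and rewrite the negative tail using the elementary identity
$$
[c; -d, -b_2, -b_3, \ldots, -b_m] = [c - 1; 1, d - 1, b_2, b_3, \ldots, b_m], \qquad d \geq 2,
$$
verified by a short inside-out calculation. With $c = z$ and $d = a_n$ this transforms the right-hand side of the signed folding identity into $[a_0; \mathbf{a}, z-1, 1, a_n - 1, a_{n-1}, \ldots, a_1] = [a_0; \mathbf{a}, z-1, \tilde{\mathbf{a}}^R] = \varphi_z^{(+1)}([a_0;\mathbf{a}])$. For the $-1$ case I would instead apply the signed folding identity to the alternate length-$(n{+}1)$ representation $p_n/q_n = [a_0; \tilde{\mathbf{a}}]$, obtaining
$$
\frac{p_n}{q_n} - \frac{(-1)^n}{z q_n^2} = [a_0; \tilde{\mathbf{a}}, z, -\tilde{\mathbf{a}}^R];
$$
since $-\tilde{\mathbf{a}}^R$ now begins with $-1$, positivization uses the companion identity $[c; -1, -b_2, \ldots, -b_m] = [c - 1; b_2 + 1, b_3, \ldots, b_m]$, which collapses the tail to $(z-1, a_n, a_{n-1}, \ldots, a_1) = (z-1, \mathbf{a}^R)$ and yields $\varphi_z^{(-1)}([a_0; \mathbf{a}])$.

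The trickiest piece in practice is the telescoping matrix computation of Stage~1, where one must carefully track signs through the conjugation by $J$; by contrast the two positivization identities are essentially one-line verifications. A secondary bookkeeping obstacle arises in the degenerate cases $z = 1$ or $a_n = 1$, where the rewrites produce a zero partial quotient, so that the expressions $\tilde{\mathbf{a}}^R$ and $\mathbf{a}^R$ in the conclusion must be interpreted via the concatenation rule \eqref{concat}, exactly as the convention installed just after that display prescribes; a case-by-case check is needed to confirm that the zeros collapse correctly and that the resulting partial quotients are all positive integers.
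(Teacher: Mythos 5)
Your argument is correct and rests on the same engine as the paper's proof --- a $2\times2$ matrix computation exploiting the reversal--transpose symmetry of the products ${\bf A}_{a_1}\cdots {\bf A}_{a_n}$ together with the determinant identity (\ref{detid}) --- but it is organised differently. The paper cites the $\varphi_z^{(+1)}$ case to earlier work and, for $\varphi_z^{(-1)}$, directly evaluates the matrix product of the already-positivized word $[a_0;\tilde{{\bf a}},z-1,{\bf a}^R]$, reading off its final convergent. You instead first prove the classical \emph{signed} folding identity $[a_0;{\bf a},y,-{\bf a}^R]=p_n/q_n+(-1)^n/(yq_n^2)$ and then positivize the negative tail; the $+1$ case comes from the representation $[a_0;{\bf a}]$ of length $n$, while the $-1$ case comes from applying the same identity to the alternate representation $[a_0;\tilde{{\bf a}}]$ of length $n+1$, so that the sign flip is explained purely by the parity change. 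This is a genuinely tidier unification of the two signs than treating them separately, at the modest cost of needing the two tail-positivization identities (both of which you state correctly and which are one-line checks). Two small points of hygiene: the conjugation you quote should read $M_{-a}=-J^{-1}M_aJ$ rather than $M_{-a}=J^{-1}M_aJ$ (the stray factor $(-1)^n$ is harmless since the continued fraction value is a ratio of entries, but it must be tracked if one wants the first column of the product on the nose); and, as you note, the cases $z=1$ or $a_n=1$ ($d<2$ in your first positivization identity) require the concatenation convention (\ref{concat}), exactly as the paper stipulates after (\ref{zfam}).
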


\begin{proof} This is a version of what is referred to as the Folding Lemma in \cite{adamc}, where it is 
attributed to Mend\`es France \cite{MF}. 
The formula for $\varphi_z^{(+ 1)}$ is Lemma~4.1 in \cite{HoneVar}, and is also a corollary of Proposition~2 in \cite{folded}, so we just give details of the proof for $\varphi_z^{(-1)}$. 
Using matrix identities, similarly to  
the proof of 
Proposition~2.1 in \cite{Hone2}, we define 
$$ 
{\bf A}_{a}:=\left(\begin{array}{cc} a & 1 \\ 
1 & 0 \end{array} \right) 
$$
so that 
$$ 
{\bf M}_n := {\bf A}_{a_0} {\bf A}_{a_1}\cdots {\bf A}_{a_n} 
= \left(\bear{cc} p_n & p_{n-1}\\ 
q_n & q_{n-1}
\eear\right), 
$$ 
by (\ref{mat}). 
Then the continued fraction $\varphi_z^{(- 1)}([a_0;{\bf a}]) = [a_0; \tilde{{\bf a}}, z-1, {\bf a}^R]$ 
of length $2n+2$ corresponds to the matrix product 
$$ 
\begin{array}{rcl} 
\tilde{{\bf M}}_{2n+2} & := & 
{\bf A}_{a_0} {\bf A}_{a_1}\cdots {\bf A}_{a_{n-1}} {\bf A}_{a_n-1} {\bf A}_{1} {\bf A}_{z-1} 
{\bf A}_{a_n} {\bf A}_{a_{n-1}}\cdots {\bf A}_{a_{1}} \\ 
& = & {\bf M}_n {\bf A}_{a_n}^{-1} {\bf A}_{a_n-1} {\bf A}_{1} {\bf A}_{z-1} {\bf M}_n^T {\bf A}_{a_0}^{-1} 
\end{array} 
$$ 
which simplifies further to give 
$$
\begin{array}{rcl} 
 \tilde{{\bf M}}_{2n+2}& = & \left(\bear{cc} \tilde{p}_{2n+2} & \tilde{p}_{2n+1}\\ 
 \tilde{q}_{2n+2} & \tilde{q}_{2n+1}
\eear\right) \\
& = & \left(\bear{cc} p_n & p_{n-1}\\ 
q_n & q_{n-1}
\eear\right) \left(\bear{cc} z & 1\\ 
-1 & 0
\eear\right) \left(\bear{cc} q_n & p_{n}-a_0q_{n}\\ 
q_{n-1} & p_{n-1}-a_0q_{n-1}
\eear\right), 
\end{array} 
$$ 
where the entries in the first column of $\tilde{{\bf M}}_{2n+2}$ are 
$$ 
\tilde{p}_{2n+2}=zp_nq_n+p_nq_{n-1}-p_{n-1}q_n, \qquad 
\tilde{q}_{2n+2}=zq_n^2. 
$$ 
Then, from the determinant formula (\ref{detid}) it follows that the final convergent of the finite continued fraction 
$[a_0; \tilde{{\bf a}}, z-1, {\bf a}^R]$ 
is 
$$ 
\frac{\tilde{p}_{2n+2}}{\tilde{q}_{2n+2}}= \frac{zp_nq_n+(-1)^{n-1}}{zq_n^2} = \frac{p_n}{q_n} - \frac{(-1)^n}{zq_n^2}, 
$$ 
as required. 
\end{proof}

We now have the necessary tools to describe the continued fraction expansion of a series 
(\ref{rndsign}), subject to the strong Engel property (\ref{strong}). For $\eps_1=\pm 1$, after subtracting 
the integer part, we arrive at a series of the form 
\beq\label{pqstrong} 
S=\frac{p}{q} +\sum_{n=2}^\infty \frac{\eps_n}{x_n}, \quad q=x_1, \quad \eps_n=\pm 1, 
\eeq 
with $0<p/q<1$; so we proceed to describe the continued fraction expansion for a sum 
of the form (\ref{pqstrong}) with an arbitrary positive rational number $p/q$. 
Due to the property of finite continued fractions that 
\beq
\label{propr}
[a_0; {\bf a}, b,1]=[a_0; {\bf a},b+1], 
\eeq 
we can always write $p/q$ in the form 
\beq\label{pqfrac} 
\frac{p}{q} = [a_0; a_1,\ldots,a_{k}], \qquad a_k>1. 
\eeq 

The main result of this section is the following 
generalization of Theorem~1 in~\cite{folded}.

\begin{theorem}\label{main}
Given a rational number $p/q$ in lowest terms, if it is an integer then write $p/q=a_0=[a_0]$, or otherwise 
let $[a_0; {\bf a}]$ be its continued fraction expansion (\ref{pqfrac}) 
of length $k>0$. Then, subject to the strong Engel property 
(\ref{strong}), the continued fraction expansion of the series (\ref{pqstrong}) is 
given in terms of $[a_0; {\bf a}]$ and the integer parameters $z_j=x_j/x_{j-1}^2>0$ for $j\geq 2$ by 
\beq\label{scf}
S = \prod_{j=3}^\infty \varphi_{z_j}^{(\eps_j)}
\big( \varphi_{z_2}^{(\eps_2(-1)^k)}([a_0; {\bf a}])\big)   
= \cdots  \varphi_{z_4}^{(\eps_4)} \varphi_{z_3}^{(\eps_3)} 
\varphi_{z_2}^{(\eps_2(-1)^k)}([a_0; {\bf a}]) .
\eeq 
Suppose further that $k>0$ and all $z_j>1$, with $a_1>2$ if $k=1$, and $a_1>1$ otherwise. 
Then 
the length $\ell_n = \ell(S_n)$ of the continued fraction for $S_n$, the $n$th partial sum of the series, is 
\beq\label{lensn}
\ell_n = (k+2)2^{n-1}-2. 
\eeq 
\end{theorem}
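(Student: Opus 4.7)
The plan is to prove the claim by induction on $n$, where $S_n = \sum_{j=1}^n \eps_j/x_j$ denotes the $n$th partial sum. The Folding Lemma (Lemma~\ref{trans}) is the engine: applied to a finite CF with convergent $p_m/q_m$ in lowest terms and length $m$, it produces a CF of value $p_m/q_m + \delta(-1)^m/(zq_m^2)$ and denominator $zq_m^2$. At each stage I would carry along three pieces of data about the CF $C_n$ representing $S_n$: its value ($=S_n$), its length $\ell_n$ (which I want to be even for $n\geq 2$, so that the sign $(-1)^{\ell_n}$ is trivial at the next step), and the denominator of its convergent (which I want to be exactly $x_n$, so that $z_{n+1}x_n^2 = x_{n+1}$ matches the next term's denominator).

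The base case $n=1$ takes $C_1 = [a_0; {\bf a}]$ of length $k$, value $p/q$, and denominator $x_1 = q$. Passing from $n=1$ to $n=2$, Lemma~\ref{trans} with sign $\delta = \eps_2(-1)^k$ gives
\[
\varphi_{z_2}^{(\eps_2(-1)^k)}([a_0; {\bf a}]) = \frac{p}{q} + \frac{\eps_2(-1)^k(-1)^k}{z_2\, x_1^2} = \frac{p}{q} + \frac{\eps_2}{x_2} = S_2,
\]
with denominator $z_2 x_1^2 = x_2$ and length $2k+2$, which is even. For the inductive step with $n\geq 2$, assuming $C_n$ has value $S_n$, denominator $x_n$, and even length $\ell_n$, Lemma~\ref{trans} applied with sign $\eps_{n+1}$ gives
\[
\varphi_{z_{n+1}}^{(\eps_{n+1})}(C_n) = S_n + \frac{\eps_{n+1}(-1)^{\ell_n}}{z_{n+1}\, x_n^2} = S_n + \frac{\eps_{n+1}}{x_{n+1}} = S_{n+1},
\]
with denominator $x_{n+1}$ and new length $2\ell_n+2$, still even. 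This closes the induction and proves formula (\ref{scf}) for partial sums; the length recurrence $\ell_{n+1} = 2\ell_n+2$ with $\ell_1 = k$ integrates to $\ell_n = (k+2)2^{n-1}-2$, giving (\ref{lensn}).

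The main obstacle is verifying that this generic doubling of length is actually attained, i.e.\ that the concatenation rule (\ref{concat}) is never triggered. A zero can enter the output of $\varphi_z^{(\pm 1)}$ only via the $z-1$ slot (excluded by $z_j>1$) or via the $b_m-1$ slot in $\tilde{{\bf b}}$, where $b_m$ is the last partial quotient of the current CF. I would track $b_m$ through the iteration: inspection of (\ref{zfam}) shows that both $\varphi_z^{(+1)}$ and $\varphi_z^{(-1)}$ end their output with the first partial quotient $a_1$ of the input's fractional part, and for $k>1$ this first partial quotient is preserved throughout, so $a_1>1$ suffices. For $k=1$, on the other hand, $\varphi_z^{(+1)}([a_0;a_1]) = [a_0; a_1, z-1, 1, a_1-1]$ ends in $a_1-1$, and a short case check shows that from then on $b_m$ oscillates between $a_1$ and $a_1-1$, forcing $a_1-1>1$, i.e.\ $a_1>2$.

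To lift the per-partial-sum statement to the full series, I would observe that $\varphi_z^{(+1)}$ preserves the first $\ell_n+1$ entries of $C_n$, while $\varphi_z^{(-1)}$ preserves its first $\ell_n$ entries and modifies only entry $\ell_n$ (from $a_{\ell_n}$ to $a_{\ell_n}-1$); hence each position $i$ of the partial-quotient sequence becomes permanent once $\ell_n > i$, and since $\ell_n\to\infty$ the $C_n$ converge entrywise to an infinite continued fraction whose convergents include every $S_n$. Because $S_n\to S$ (the strong Engel property forces absolute convergence of the series), the limiting infinite CF represents $S$, completing the proof of (\ref{scf}).
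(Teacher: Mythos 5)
Your proposal is correct and follows essentially the same route as the paper: induction via the Folding Lemma, with the sign correction $\eps_2(-1)^k$ at the first step, evenness of all subsequent lengths so that $(-1)^{\ell_n}=+1$, the identification $z_{n+1}q_{\ell_n}^2=x_{n+1}$, and the recurrence $\ell_{n+1}=2\ell_n+2$ with $\ell_1=k$. You are somewhat more explicit than the paper about tracking the convergent's denominator, about why the hypotheses $z_j>1$ and $a_1>1$ (resp.\ $a_1>2$ when $k=1$) prevent concatenation, and about the entrywise stabilization needed to pass to the infinite continued fraction, but these points are all handled in the paper's proof or its surrounding discussion.
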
 

\begin{proof} 
The formula (\ref{scf}) for $S$ follows by induction from Lemma~\ref{trans}, using the fact 
that 
$$ 
S_{n+1} = S_n + \frac{\eps_{n+1}}{x_{n+1}} = \frac{p_{\ell_n}}{q_{\ell_n}}+
\frac{\eps_{n+1}}
{z_{n+1}q_{\ell_n}^2}, 
$$ 
so the continued fraction for $S_{n+1}$ is obtained by applying the operator $\varphi_{z_{n+1}}^{(\eps_{n+1})}$ 
to the continued fraction for $S_{n}$, 
except if $n=1$ and $k$ is odd, when one should apply $\varphi_{z_{2}}^{(-\eps_{2})}$ instead; and note that the length $\ell_n$ is even for $n>1$, since the 
operators $\varphi_{z}^{(\pm 1)}$ always produce an even length continued fraction. The exact expression (\ref{lensn}) for 
the lengths in the case of $k>0$ and generic parameters follows immediately from the recurrence 
$\ell_{n+1}=2\ell_n +2$ with the initial value $\ell_1=k$. 
\end{proof} 

\begin{remark} 
Theorem~2.3 in \cite{Hone2} covers the special case $p/q=1$ with $\eps_j=+1$ for all $j$, while 
Theorem~4.2 in \cite{HoneVar} is the case of $k$ even with all $\eps_j=+1$.
\end{remark} 

It is worth briefly commenting on the non-generic cases, when the formula (\ref{lensn}) 
is no longer valid. 
If $z_j=1$ for some $j\geq 2$ then 
a zero appears and concatenation reduces the length of the continued fraction by~$2$. 
When $k=0$, starting from $[a_0]=a_0$ we see from (\ref{zerolength}) 
that $\varphi_{z_2}^{(+1)} ( [a_0])$ and $\varphi_{z_2}^{(-1)} ( [a_0])$ include   
1 as a partial quotient, which means that in the first case a single application of  
$\varphi^{(+1)}$ or $\varphi^{(-1)}$ produces a zero, while in the second case applying $\varphi^{(+1)}$ or $\varphi^{(-1)}$ moves the 1 
to the end, so that a zero appears at the next step. 

Otherwise, for $k\geq 1$ the initial application of 
$\varphi_{z_2}^{(\pm1)}$ moves the coefficient $a_1$ to the end of the continued fraction, where it remains 
thereafter, so if 
$a_1=1$ then a zero appears at the next step. 
In the particular case $k=1$, applying $\varphi^{(+1)}$ twice sends 
$$ 
\begin{array}{rcl} 
[a_0; a_1] & \mapsto & [a_0;a_1,z_2-1,1,a_1-1] 
\\ 
& \mapsto & 
[a_0;a_1,z_2-1,1,a_1-1,z_3-1,1,a_1-2,1,z_2-1,a_1], 
\end{array}  
$$
and the situation is similar 
when $\varphi^{(+1)}$ is followed by 
$\varphi^{(-1)}$,  
while $\varphi^{(-1)}$ followed by 
$\varphi^{(+1)}$ sends 
$$ 
\begin{array}{rcl} 
[a_0; a_1]& \mapsto & [a_0;a_1-1,1, z_2-1,a_1] \\ 
& \mapsto & 
[a_0;a_1-1,1,z_2-1,a_1,z_3-1,1,a_1-1,z_2-1,1,a_1-1],
\end{array}  
$$ 
so that $a_1-2$ appears at the next step, and similarly when $\varphi^{(-1)}$ is applied twice. So 
the case $a_1=2$ is also degenerate when $k=1$. 

The latter considerations allow us to state the following corollary of Theorem~\ref{main}, 
which will be relevant to the series of L\"uroth type treated in the next section. 

\begin{corollary}\label{lencor} 
For a strong Engel series with signs, that is 
\beq\label{sstreng} 
\frac{1}{x_1}+\sum_{j=2}^\infty \frac{\eps_j}{x_j}, \qquad \eps_j=\pm 1,
\eeq 
with $z_1=x_1>1$, $z_j = x_{j}/x_{j-1}^2>1$ for all $j\geq 2$, 
the $n$th partial sum $S_n$ has length 
\beq \label{genericl} 
\ell_n = 3\cdot 2^{n-1} -2, \qquad n\geq 1,
\eeq 
in the generic case that $x_1>2$. In the special case $x_1=2$, for a strong Engel series with 
$\eps_j=1$ for all $j$,  
the formula should be modified to 
\beq \label{specengel} 
\ell_n = 5\cdot 2^{n-2}, \qquad n\geq 3, 
\eeq 
while for a strong Pierce series with $\eps_j=(-1)^{j-1}$, the formula becomes 
\beq \label{specpierce} 
\ell_n = 5\cdot 2^{n-2}-2, \qquad n\geq 3. 
\eeq 
\end{corollary}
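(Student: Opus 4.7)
The plan is to treat the corollary as a direct application of Theorem~\ref{main}, handling the two cases $x_1 = 2$ that lie outside its genericity hypothesis. Writing $p/q = 1/x_1 = [0;\, x_1]$ puts us in the framework of Theorem~\ref{main} with $k = 1$ and $a_1 = x_1$; when $x_1 > 2$ the genericity condition $a_1 > 2$ holds, and formula~(\ref{lensn}) immediately gives $\ell_n = 3 \cdot 2^{n-1} - 2$.

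The remaining work concerns $x_1 = 2$. I would first compute $S_2$ from~(\ref{scf}). In the strong Engel case ($\eps_j \equiv 1$) the sign at the first step is $\eps_2(-1)^k = -1$, so $S_2 = \varphi_{z_2}^{(-1)}([0;\, 2]) = [0;\, 1, 1, z_2 - 1, 2]$; in the strong Pierce case ($\eps_j = (-1)^{j-1}$) the sign is $+1$, yielding $S_2 = \varphi_{z_2}^{(+1)}([0;\, 2]) = [0;\, 2, z_2 - 1, 1, 1]$. Both have length $4$, and since $z_j \geq 2$ no concatenation intervenes.

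The key step is to establish three structural properties of $\varphi_z^{(\pm 1)}$, valid whenever the input continued fraction has length at least $2$ and $z > 1$, all of which can be read off directly from~(\ref{zfam}): (i) both operators preserve the partial quotient $a_1$; (ii) the last partial quotient of the output equals the $a_1$ of the input; (iii) a concatenation via~(\ref{concat}) is triggered precisely when the last partial quotient of the input is~$1$, and in that case exactly one $0$ appears and is flanked by $1$ and a positive integer, so~(\ref{concat}) reduces the length by~$2$ and introduces no new zeros.

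A short induction then finishes both subcases. In the strong Engel case with $x_1 = 2$, property~(i) locks $a_1 = 1$ from $S_2$ onward, so by~(ii) every $S_n$ with $n \geq 3$ ends in~$1$ and a concatenation occurs at each subsequent step; since a bare application of $\varphi$ produces length $2\ell_n + 2$, the recurrence $\ell_{n+1} = 2\ell_n$ holds for $n \geq 3$ with $\ell_3 = 2\ell_2 + 2 = 10$, giving $\ell_n = 5 \cdot 2^{n-2}$. In the strong Pierce case with $x_1 = 2$ the locked value is $a_1 = 2$; the only concatenation occurs at $S_2 \to S_3$ (since $S_2$ ends in~$1$), producing $\ell_3 = 8$, after which~(ii) forces $S_n$ to end in~$2$ and the recurrence $\ell_{n+1} = 2\ell_n + 2$ holds, yielding $\ell_n = 5 \cdot 2^{n-2} - 2$. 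The main, though routine, difficulty is the bookkeeping in property~(iii), which must be verified for both operator signs.
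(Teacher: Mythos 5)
Your proposal is correct and follows essentially the same route as the paper: reduce to Theorem~\ref{main} with $k=1$, $a_1=x_1$ for the generic case, and for $x_1=2$ track the first and last partial quotients to see exactly when the concatenation (\ref{concat}) modifies the length recurrence ($\ell_{n+1}=2\ell_n$ for Engel, $\ell_{n+1}=2\ell_n+2$ after a single concatenation at $n=2$ for Pierce). The only difference is presentational: the paper exhibits the first few folded continued fractions explicitly and reads off the pattern, whereas you isolate the three structural properties of $\varphi_z^{(\pm1)}$ and run a clean induction, which makes the "all subsequent steps behave the same" claim more transparent.
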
 

\begin{proof}[Proof of Corollary]
This is the case $k=1$ of Theorem~\ref{main} with $a_0=0$ and $a_1=x_1=z_1$, so the generic 
length formula (\ref{lensn}) yields (\ref{genericl}) immediately. In the degenerate case $x_1=a_1=2$, 
for a strong Engel series one must start by applying  
$\varphi^{(-1)}$ followed by 
$\varphi^{(+1)}$, which sends 
$$
[0; 2]\mapsto 
[0;1,1, z_2-1,2]\mapsto 
[0;1,1,z_2-1,2,z_3-1,1,1,z_2-1,1,1], 
$$ 
while at the next stage $\varphi^{(+1)}$ together with (\ref{concat}) produces 
\small 
$$
[0;1,1,z_2-1,2,z_3-1,1,1,z_2-1,1,1,z_4-1,2,z_2-1,1,1,z_3-1,2,z_2-1,1,1]. 
$$ 
\normalsize
Then, because 
this and all subsequent continued fractions begin 
$[0;1,1,\ldots]$, each successive application of $\varphi^{(+1)}$
requires concatenation, so the recurrence for the lengths is modified to $\ell_{n+1}=2\ell_n$ for $n\geq 3$, which  
gives the formula (\ref{specengel}). For a strong Pierce series one should repeatedly apply 
$\varphi^{(+1)}$ followed by 
$\varphi^{(-1)}$, so that with $x_1=2$ the sequence begins 
$$
[0; 2]\mapsto 
[0;2, z_2-1,1,1]\mapsto 
[0;2,z_2-1,2,z_3-1,1,1,z_2-1,2], 
$$ 
where we used (\ref{concat}) in the second step, 
and thereafter there is always a $2$ at the beginning and end of each continued fraction, so 
no further concatenation is required and we just have the generic recursion 
$\ell_{n+1}=2\ell_n +2$ for $n\geq 3$, giving (\ref{specpierce}). 
\end{proof} 

\begin{remark}
Note that in the strong Engel case with $x_1=2$ 
the 
continued fractions from $n=3$ onwards all end with $[\ldots,1,1]$, so 
we could instead use (\ref{propr}) to make them end with $[\ldots,2]$ and write a reduced length 
formula 
$$
\ell_n = 5\cdot 2^{n-2}-1
$$ 
in that case. However, the non-reduced continued fractions 
maintain the property of having even length at each stage, which we prefer to keep. 
Similarly, in the strong Pierce case we instead take the reduced continued fractions ending in $2$ for the same reason.
\end{remark} 

\section{L\"{u}roth series and generalizations}
\label{luroths}

It was shown by L\"uroth that every real number in the interval $(0,1)$ admits an expansion of the form 
(\ref{luroth}) for a certain sequence of integers $u_j\geq 2$ \cite{lurothoriginal}. As well 
as being used for Diophantine approximation of real numbers \cite{CWZ, DaKr}, 
L\"uroth series have also been employed in the context of rational function 
approximations of power series \cite{KnKn1, KnKn2}. 

In \cite{kkk}, Kalpazidou et al.\ introduced two different alternating analogues of L\"uroth series, each of 
which provides a unique representation of a real number. For a number in $(0,1)$, the first 
type of alternating L\"uroth expansion defined in \cite{kkk} takes the form 
\beq\label{altluroth} 
 \frac{1}{u_1} +\sum_{j=2}^{\infty} \frac{(-1)^{j-1}}{u_{1}(u_{1}+1) \cdots 
  u_{j-1}(u_{j-1}+1) u_{j}}, 
\eeq 
for a sequence of integers $u_j\geq 1$. 

It is clear from the form of (\ref{luroth}) that if we set $x_1=u_1$ and 
$$ 
x_j =u_j \prod_{k=1}^{j-1} u_k(u_k-1), \qquad j\geq 2, 
$$ 
then $x_j \mid x_{j+1}$, so a L\"uroth series is a particular example of an Engel series, 
and similarly an alternating L\"uroth series (\ref{altluroth}) is a particular type of Pierce series. 

In order to consider these two examples together it is convenient to start with a more general family of series, of the form 
\beq\label{sfam}
  S' = \frac{1}{u_1} +\sum_{j=2}^{\infty} \frac{\eps_{j}}{u_{1}v_1 \cdots 
  u_{j-1}v_{j-1} u_{j}}, \quad \eps_j=\pm 1, 
\eeq 
for sequences of integers $u_j,v_j\in\Zp$. 
If we now take 
\beq\label{xudef} 
x_1=u_1, \qquad 
x_j =u_j \prod_{k=1}^{j-1} u_kv_k, \qquad j\geq 2, 
\eeq 
then it turns out we can further obtain the strong Engel property (\ref{strong}) for the sequence 
$(x_n)$ whenever $(u_n)$ satisfies certain recurrence relations of second order, analogous to~(\ref{2nd}). 

\begin{proposition}\label{recprop} 
Suppose that the sequence $(u_n)$ satisfies 
either the recurrence 
\beq\label{recjlv}
u_{n+2}u_n = \al_{n} u_{n+1}^3 v_{n+1}, \qquad n\geq 1, 
\eeq 
where $u_2=mu_1^2v_1$, 
or 
\beq\label{recanwh} 
u_{n+2} = \al_{n} u_{n+1}^2v_n, \qquad n\geq 1, 
\eeq 
where $u_2=mu_1$, and 
in each case $(\al_n)$ is an arbitrary sequence of positive integers, with
$u_1,m\in\Zp$ arbitrary. 
Then 
the associated sequence $(x_n)$ defined by (\ref{xudef}) has the strong Engel property, that is 
$z_j =x_{j}/x_{j-1}^2 \in \Z$ holds for all $j\geq 2$. 
\end{proposition}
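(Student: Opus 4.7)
The plan is to prove each $z_j\in\Zp$ by induction on $j$, after first rewriting $z_j$ in a shape that exposes a sharper divisibility property that is what actually propagates. Writing $\Pi_n:=\prod_{k=1}^{n-1}u_kv_k$ (with $\Pi_1=1$), so that $x_n=u_n\Pi_n$ and $\Pi_n=u_{n-1}v_{n-1}\Pi_{n-1}$, a short calculation yields the uniform identity
$$
z_j \;=\; \frac{u_j\,v_{j-1}}{u_{j-1}\,\Pi_{j-1}}, \qquad j\geq 2.
$$
The base case $j=2$ is then immediate from the prescribed initial value of $u_2$: under (\ref{recjlv}) one gets $z_2=mu_1v_1^2$, and under (\ref{recanwh}) one gets $z_2=mv_1$, both obviously positive integers.

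For the inductive step, I would substitute the appropriate recurrence for $u_j$ into the expression above, expand $\Pi_{j-1}=u_{j-2}v_{j-2}\Pi_{j-2}$, and then re-express the surviving $u_{j-1},u_{j-2},\Pi_{j-2}$ factors using the identity $z_{j-1}=u_{j-1}v_{j-2}/(u_{j-2}\Pi_{j-2})$. For $j\geq 3$ this produces the clean recursions
$$
z_j \;=\; \frac{\al_{j-2}\,z_{j-1}^2\,v_{j-1}^2\,\Pi_{j-2}}{v_{j-2}^3} \quad \text{under (\ref{recjlv})}, \qquad
z_j \;=\; \frac{\al_{j-2}\,z_{j-1}\,v_{j-1}}{v_{j-2}} \quad \text{under (\ref{recanwh})}.
$$

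Neither recursion is manifestly an integer, and this is where the real work lies. I would strengthen the induction hypothesis: prove that $v_{j-1}^2\mid z_j$ in the first case and $v_{j-1}\mid z_j$ in the second. These sharpened statements both hold at $j=2$ by the explicit formulas above, and they are propagated automatically by the recursions. Writing $z_{j-1}=v_{j-2}^2w$ (resp.\ $z_{j-1}=v_{j-2}w$) for some $w\in\Zp$, direct substitution cancels the $v_{j-2}$ denominator and yields $z_j=\al_{j-2}v_{j-2}w^2v_{j-1}^2\Pi_{j-2}$ (resp.\ $z_j=\al_{j-2}wv_{j-1}$), which is a positive integer divisible by the required power of $v_{j-1}$.

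The main obstacle I anticipate is spotting that the naive claim ``$z_j\in\Z$'' is too weak to run the induction: one has to observe that $z_{j-1}$ always carries enough powers of $v_{j-2}$ to absorb the denominator appearing at the next step. Once this strengthened hypothesis is in place, the argument becomes routine algebraic bookkeeping, carried out in parallel for the two recurrences, the only structural difference being whether one has to track $v_{j-1}$ or $v_{j-1}^2$ through each step.
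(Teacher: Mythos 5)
Your proof is correct. The identity $z_j=u_jv_{j-1}/(u_{j-1}\Pi_{j-1})$ (equivalently $z_j=u_jv_{j-1}/x_{j-1}$, since $x_j=u_jv_{j-1}x_{j-1}$), the two recursions for $z_j$ in terms of $z_{j-1}$, and the propagation of the strengthened hypotheses $v_{j-1}^2\mid z_j$ (resp.\ $v_{j-1}\mid z_j$) all check out, and the base cases match. Your route differs from the paper's in how the induction is organized. The paper works with the exponentiated shift $\theta$ and isolates an auxiliary quantity, $\rho_n=\theta[u_n]/(x_nv_n)$ for the first recurrence and $\rho_n=u_n\theta[u_n]/x_n$ for the second, which satisfies the trivial recursion $\rho_{n+1}=\al_n\rho_n$ with $\rho_1=m$; this makes $z_{n+2}=\al_nu_{n+1}v_{n+1}^2\rho_n$ (resp.\ $z_{n+2}=\al_nv_{n+1}\rho_n$) a manifest product of positive integers, so no strengthened induction hypothesis is needed, and it yields the closed form $\rho_n=m\prod_{k=1}^{n-1}\al_k$ that is reused in Theorem~\ref{lurthm}. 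In effect the paper proves the sharper divisibility $u_nv_n^2\mid z_{n+1}$ (resp.\ $v_{n+1}\rho_n$ exactly) with an explicit quotient, whereas you prove just enough divisibility by $v_{j-2}$ to cancel the denominator in your $z_j$-recursion. Your argument is a legitimate, more self-contained alternative; its cost is that the integrality of $z_j$ is not exhibited by an explicit product formula, so to recover the explicit expressions for $z_{n+1}$ needed later one would still have to unwind your $w$'s (your $w$ at step $j$ is $u_{j-2}\rho_{j-2}$, resp.\ $\rho_{j-2}$, in the paper's notation).
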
 

\begin{proof} 
For the proof 
it is convenient to write the various relations between $u_n,v_n,\al_n$ and $x_n$ in terms 
of the exponentiated shift operator, as in (\ref{eshift}). From (\ref{xudef}) we have 
$$ 
\theta [ x_n] = u_nv_n \theta [u_n], \qquad n\geq 1, 
$$ 
which implies 
$$
\theta^2 [ x_n] = \theta [u_n]\theta[v_n] \theta^2 [u_n]
$$ 
also holds for $n\geq 1$.
Then, by rewriting the first recurrence (\ref{recjlv}) as 
$$ 
\theta^2[u_n]=\al_n u_{n+1}v_{n+1}, 
$$ 
we calculate 
$$
z_2=\frac{x_2}{x_1^2}=\frac{u_2v_1}{u_1} =mu_1v_1^2\in\Zp, 
$$ 
while for $n\geq 1$ we have 
$$ 
z_{n+2}=\al_nu_{n+1}v_{n+1}^2 \rho_n, 
$$ 
with 
$$ 
\rho_n := \frac{\theta[u_n]}{x_nv_n}, \qquad n\geq 1. 
$$ 
The latter definition gives $\rho_1=u_2/(u_1^2v_1)=m$ and 
\beq \label{rhorec} 
\theta [\rho_n]=\frac{\theta^2[u_n]}{\theta[x_n]\theta[v_n]}=\al_n, 
\eeq 
which just says that $\rho_{n+1}=\al_n\rho_n$, so by induction we have 
$\rho_n\in\Zp$ for all $n\geq 1$, and this implies $z_j\in\Zp$ for all $j\geq 2$, as 
required. 
Similarly, we rewrite the second recurrence (\ref{recanwh}) as 
$$ 
\theta^2[u_n]=\al_n u_{n}v_{n}, 
$$ 
then calculate 
$$
z_2=\frac{u_2v_1}{u_1} =mv_1\in\Zp, 
$$ 
and for $n\geq 1$ we find 
$$ 
z_{n+2}=\al_n v_{n+1} \rho_n, 
$$ 
where in this case we instead take the definition 
$$ 
\rho_n := \frac{u_n\theta[u_n]}{x_n}, \qquad n\geq 1. 
$$ 
The latter definition gives $\rho_1=m$ once again, and also 
$$ 
\theta [\rho_n]=\frac{\theta[u_n]\theta^2[u_n]}{\theta[x_n]}=\al_n, 
$$ 
which is the same final result for $\theta [\rho_n]$ as in (\ref{rhorec}), 
so the conclusion is the same. 
\end{proof} 

\begin{example}\label{lur1} 
Taking $\al_n=1$, $v_n=u_n-1$ for all $n\geq 1$, the recurrence (\ref{recjlv}) becomes 
\beq 
\label{jlv1} 
u_{n+2}u_n=u_{n+1}^3 (u_{n+1}-1), 
\eeq 
and setting $u_1=3$, $m=1$ gives $u_2=u_1^2(u_1-1)=18$, so the sequence $(u_n)$ begins with 
\beq\label{lurseq} 
3,18,33048, 66266659938624768,\ldots . 
\eeq 
We have $x_1=3$, $x_{n+1}=x_nu_{n+1}(u_n-1)$ for $n\geq 1$. Hence 
$$ 
3, 108, 60676128, 132875521042766180738219532288,\ldots 
$$
is the beginning of the sequence $(x_n)$. 
Then we find 
$
z_2={108}/{3^2} = 12$, 
$ 
z_3={60676128}/{108^2} = 5202$, 
$
z_4={132875521042766180738219532288}/{60676128^2} = 36091859899032$, 
and so on. 
\end{example} 

\begin{example}\label{altlur2} 
Taking $\al_n=1$, $v_n=u_n+1$ for all $n\geq 1$, the recurrence (\ref{recanwh}) becomes 
\beq 
\label{anwh1} 
u_{n+2}=u_{n+1}^2 (u_{n}+1), 
\eeq 
and taking $u_1=2$, $m=1$ gives $u_2=u_1=2$, so the sequence $(u_n)$ begins with 
\beq\label{altlurseq}
2,2,12,432,2426112,\ldots .
\eeq 
We have $x_1=2$, $x_{n+1}=x_nu_{n+1}(u_n+1)$ for $n\geq 1$. Hence 
$$ 
2,12,432,2426112,2548646416023552,\ldots 
$$
is the beginning of the sequence $(x_n)$, and it is not hard to 
show that in fact $x_{n}=u_{n+1}$ holds for all $n\geq 1$, with this particular choice of 
initial values for~(\ref{anwh1}). 
Then we find 
$
z_2={12}/{2^2}=3$, 
$ 
z_3=432/{12^2}=3$, 
$
z_4={2426112}/{432^2}=13$, 
and in general $z_n=u_{n-1}+1$ for all $n\geq 2$. 
\end{example} 

We can now combine the results in section~\ref{sec:ecf} with Proposition~\ref{recprop} to describe 
the continued fraction expansion of certain L\"uroth series with the strong 
Engel property, not just of the form (\ref{luroth}) but 
also with arbitrary signs inserted. 

\begin{theorem}\label{lurthm}
Suppose that the sequence $(u_n)$ satisfies 
either the recurrence 
\beq\label{recjlvlur}
u_{n+2}u_n = \al_{n} u_{n+1}^3 (u_{n+1}-1), \qquad n\geq 1, 
\eeq 
where $u_2=mu_1^2(u_1-1)$, 
or 
\beq\label{recanwhlur} 
u_{n+2} = \al_{n} u_{n+1}^2(u_n-1), \qquad n\geq 1, 
\eeq 
where $u_2=mu_1$, and 
in each case $(\al_n)$ is an arbitrary sequence of positive integers, with
$u_1\in\Z_{>1}$, $m\in\Z_{>0}$ arbitrary. 
Then the continued fraction expansion of the sum 
\beq\label{sluroth} 
  S = \frac{1}{u_1} +\sum_{j=2}^{\infty} \frac{\eps_j}{u_{1}(u_{1}-1) \cdots 
  u_{j-1}(u_{j-1}-1) u_{j}}, \qquad \eps_j=\pm 1,
\eeq 
is given by 
\beq\label{lurcf}
  S = \prod_{j=3}^\infty \varphi_{z_j}^{(\eps_j)}
  \big( \varphi_{z_2}^{(-\eps_2)}([0; u_1])\big)  
  = \cdots \varphi_{z_4}^{(\eps_4)} \varphi_{z_3}^{(\eps_3)} 
  \varphi_{z_2}^{(-\eps_2)}([0; u_1]) ,
\eeq 
where either $z_{n+1}=u_{n}(u_{n}-1)^2\rho_{n}$ in the case that (\ref{recjlvlur}) holds, 
or $z_{n+1}=(u_{n}-1)\rho_{n}$ when (\ref{recanwhlur}) holds, with 
\beq\label{rhoform} 
\rho_n = m \prod_{k=1}^{n-1}\al_k
\eeq 
in both cases, for all $n\geq 1$. 
\end{theorem}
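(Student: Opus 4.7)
The plan is to recognize Theorem~\ref{lurthm} as a direct synthesis of Proposition~\ref{recprop} (taken with $v_n=u_n-1$) and Theorem~\ref{main} (applied to the rational part $p/q=1/u_1$), after which the explicit formulas for the $z_j$ will follow from the quantities already computed in the proof of Proposition~\ref{recprop}.

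First, I set $x_1=u_1$ and define $x_j = u_j \prod_{k=1}^{j-1} u_k(u_k-1)$ for $j\geq 2$, so that the series $S$ of (\ref{sluroth}) is rewritten as $\tfrac{1}{x_1}+\sum_{j\geq 2} \eps_j/x_j$, a strong Engel series with signs of the form (\ref{sstreng}). Since the recurrences (\ref{recjlvlur}) and (\ref{recanwhlur}) are exactly the specialisations $v_n=u_n-1$ of (\ref{recjlv}) and (\ref{recanwh}), Proposition~\ref{recprop} ensures that $z_j = x_j/x_{j-1}^2 \in \Zp$ for all $j\geq 2$; in particular the strong Engel property (\ref{strong}) holds.

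Next, I invoke Theorem~\ref{main} with $p/q=1/u_1=[0;u_1]$, so that $a_0=0$, ${\bf a}=(u_1)$, and $k=1$. Because $k$ is odd, the sign $\eps_2(-1)^k$ in the first operator of (\ref{scf}) becomes $-\eps_2$, producing precisely $\varphi_{z_2}^{(-\eps_2)}([0;u_1])$ as base, and the subsequent operators $\varphi_{z_j}^{(\eps_j)}$ for $j\geq 3$ are applied in order; this is exactly the formula (\ref{lurcf}).

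Finally, it remains to make the $z_{n+1}$ explicit. The proof of Proposition~\ref{recprop} already recorded $\rho_1=m$ together with the recursion $\rho_{n+1}=\al_n\rho_n$, so a one-line induction yields (\ref{rhoform}). In the case of (\ref{recjlvlur}), that same proof gives the base value $z_2 = m u_1(u_1-1)^2$ and $z_{n+2} = \al_n u_{n+1}(u_{n+1}-1)^2 \rho_n$ for $n\geq 1$; relabelling $n\mapsto n-1$ and using $\al_{n-1}\rho_{n-1}=\rho_n$ consolidates these into $z_{n+1}=u_n(u_n-1)^2\rho_n$ uniformly for all $n\geq 1$. The case of (\ref{recanwhlur}) is structurally identical, starting from $z_{n+2}=\al_n(u_{n+1}-1)\rho_n$ and producing $z_{n+1}=(u_n-1)\rho_n$. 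The only step requiring any care is tracking the parity of $k=1$ so that the sign on the first operator in (\ref{lurcf}) comes out as $-\eps_2$ rather than $+\eps_2$; beyond this, the argument is pure bookkeeping of index shifts, and no substantive analytic obstacle arises because the main work was already carried out in Proposition~\ref{recprop} and Theorem~\ref{main}.
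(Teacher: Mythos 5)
Your proposal is correct and follows essentially the same route as the paper: combine Theorem~\ref{main} with $k=1$ (so the first operator carries sign $\eps_2(-1)^k=-\eps_2$) and Proposition~\ref{recprop} with $v_n=u_n-1$, then read off the formulas for $z_{n+1}$ and $\rho_n$ from the recursion $\rho_{n+1}=\al_n\rho_n$, $\rho_1=m$. The index-shift consolidation of the base case $z_2$ with the general $z_{n+2}$ is exactly the bookkeeping the paper leaves implicit.
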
 

\begin{proof} 
This follows by combining Theorem~\ref{main} for $k=1$ with Proposition~\ref{recprop}, 
in the particular case that $v_n=u_n-1$ for all $n$, and noting that 
from (\ref{rhorec}) 
we have $\rho_{n+1}=\al_n \rho_n$ for $n\geq 1$ with the initial value $\rho_1=m$, 
which yields both (\ref{rhoform}) and the appropriate formula for $z_{n+1}$ according to 
whether (\ref{recjlvlur}) or (\ref{recanwhlur}) holds.
\end{proof}

\begin{example}\label{lur1cont}
As a continuation of Example \ref{lur1}, 
it follows that the number $S\approx 0.34259260907$ whose L\"uroth series 
is defined 
by the sequence (\ref{lurseq}), that is 
$$ 
S=\frac{1}{3}+\frac{1}{108}+\frac{1}{60676128}+\frac{1}{132875521042766180738219532288}+\cdots, 
$$ 
has continued fraction expansion 
\small
$$
[0;2,1,11,3,5201,1,2,11,1,2,36091859899031,1,1,1,11,2,1,5201,3,11,1,2,\ldots].
$$
\normalsize
The infinite continued fraction is obtained by folding the sequence of finite continued fractions for 
the $n$th truncation of the series, that is 
$$
[0;3]\mapsto [0;2,1,11,3]\mapsto [0;2,1,11,3,5201,1,2,11,1,2]\mapsto \cdots,
$$ 
where the lengths are given by the formula (\ref{genericl}), and this pattern of lengths 
remains the same if arbitrary signs are inserted in $S$. 
\end{example} 

It is straightforward to state the analogue of Theorem~\ref{lurthm} for the case 
of an alternating L\"uroth series (\ref{altluroth}), also with the inclusion of arbitrary 
signs. The proof is essentially the same so is omitted. 

\begin{theorem}\label{altlurthm} 
Suppose that the sequence $(u_n)$ satisfies 
either the recurrence 
\beq\label{recjlvaltlur}
u_{n+2}u_n = \al_{n} u_{n+1}^3 (u_{n+1}+1), \qquad n\geq 1, 
\eeq 
where $u_2=mu_1^2(u_1+1)$, 
or 
\beq\label{recanwhaltlur} 
u_{n+2} = \al_{n} u_{n+1}^2(u_n+1), \qquad n\geq 1, 
\eeq 
where $u_2=mu_1$, and 
in each case $(\al_n)$ is an arbitrary sequence of positive integers, with 
$u_1\in\Z_{>1}$, $m\in\Z_{>0}$ arbitrary. 
Then 
the continued fraction expansion of the sum 
\beq\label{saltluroth} 
  S' = \frac{1}{u_1} + \sum_{j=2}^{\infty} \frac{\eps_j}{u_{1}(u_{1}+1) \cdots 
  u_{j-1}(u_{j-1}+1) u_{j}}, \qquad \eps_j=\pm 1,
\eeq 
is given by the same formula as for $S$ in (\ref{lurcf}), 
but with $z_{n+1}=u_{n}(u_{n}+1)^2\rho_{n}$ when (\ref{recjlvaltlur}) holds, 
or $z_{n+1}=(u_{n}+1)\rho_{n}$ when (\ref{recanwhaltlur}) holds, with 
$\rho_n = m \prod_{k=1}^{n-1}\al_k$ in both cases, for all $n\geq 1$. 
\end{theorem}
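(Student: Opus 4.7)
The plan is to mirror the proof of Theorem~\ref{lurthm} verbatim, simply changing the choice of auxiliary sequence from $v_n=u_n-1$ to $v_n=u_n+1$ throughout. First I would observe that the alternating L\"uroth-type sum $S'$ in~(\ref{saltluroth}) is precisely the general series~(\ref{sfam}) evaluated at $v_n=u_n+1$, so that the associated denominator sequence $(x_n)$ defined by~(\ref{xudef}) becomes $x_1=u_1$ and $x_{j+1}=x_j u_{j+1}(u_j+1)$ for $j\geq 1$. Under this substitution, the recurrences~(\ref{recjlv}) and~(\ref{recanwh}) from Proposition~\ref{recprop} turn into~(\ref{recjlvaltlur}) and~(\ref{recanwhaltlur}), respectively, with the same initial data (up to the natural shift of sign in $v_1$).

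Next, I would apply Proposition~\ref{recprop} directly. In the first case, the formulas derived there give
\[
z_2=\frac{x_2}{x_1^2}=\frac{u_2 v_1}{u_1}=m u_1 v_1^2 = u_1(u_1+1)^2\rho_1,
\qquad
z_{n+2}=\alpha_n u_{n+1} v_{n+1}^2\rho_n = u_{n+1}(u_{n+1}+1)^2\rho_{n+1},
\]
since $\rho_1=m$ and $\rho_{n+1}=\alpha_n\rho_n$, giving $\rho_n=m\prod_{k=1}^{n-1}\alpha_k$ by induction. In the second case, the analogous computation produces $z_{n+1}=(u_n+1)\rho_n$. This already establishes the strong Engel property $z_j\in\Zp$ for all $j\geq 2$, together with the explicit formulas claimed in the statement.

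Finally, I would invoke Theorem~\ref{main} with $p/q=1/u_1$. Since $u_1\geq 2$, this rational has the continued fraction expansion $[0;u_1]$ of length $k=1$, so that the sign exponent on the first folding operator is $\eps_2(-1)^k=-\eps_2$. The theorem then yields exactly the product expansion~(\ref{lurcf}), with the $z_j$ as computed above.

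There is no substantive obstacle: once $v_n=u_n+1$ is substituted, everything reduces to bookkeeping of the formulas in Proposition~\ref{recprop} and an application of Theorem~\ref{main} with $k=1$. The only point requiring minor care is the sign $(-1)^k=-1$ that flips $\eps_2$ to $-\eps_2$ in the initial folding step, and the verification that the formula $\rho_n=m\prod_{k=1}^{n-1}\alpha_k$ holds uniformly in both cases; both of these are immediate consequences of the identities established in the proof of Proposition~\ref{recprop}.
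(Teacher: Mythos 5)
Your proposal is correct and is exactly the argument the paper intends (its own proof is omitted as ``essentially the same'' as that of Theorem~\ref{lurthm}): specialize Proposition~\ref{recprop} to $v_n=u_n+1$, read off $z_2=mu_1v_1^2$ and $z_{n+2}=\al_nu_{n+1}v_{n+1}^2\rho_n$ (resp.\ $z_{n+2}=\al_nv_{n+1}\rho_n$) together with $\rho_{n+1}=\al_n\rho_n$, $\rho_1=m$, and then apply Theorem~\ref{main} with $k=1$, which produces the initial operator $\varphi_{z_2}^{(-\eps_2)}$. The index bookkeeping in your formulas for $z_{n+1}$ checks out in both cases.
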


\begin{example}\label{altlur2cont}
As a continuation of Example \ref{altlur2}, 
it follows that the number $S'\approx 0.418981069299$ whose alternating L\"uroth expansion is defined 
by the sequence (\ref{altlurseq}), that is 
$$ 
S'=\frac{1}{2}-\frac{1}{12} +\frac{1}{432}-\frac{1}{2426112}+\frac{1}{2548646416023552}-\cdots, 
$$ 
has continued fraction expansion 
\small
$$
[0;2,2,1,1,2,2,2,1,1,12,2,2,2,2,1,1,2,2,432,1,1,2,1,1,2,2,2,2,12,1,1,\ldots].
$$
\normalsize 
The infinite continued fraction is obtained by folding the sequence of finite continued fractions 
for the $n$th truncation of the series $S'$, that is 
$$ 
\begin{array}{rcl} 
[0;2]& \mapsto & [0;2,2,1,1] \\ 
& \mapsto & [0;2,2,1,1,2,2,2,2] \\ 
& \mapsto & 
[0;2,2,1,1,2,2,2,1,1,12,2,2,2,2,1,1,2,2]
\end{array} 
$$ 
etc., and since $k=1$ and $a_1=x_1=2$ this is a non-generic case, with the lengths being given 
by the formula (\ref{specpierce}) for $n\geq 3$. 
\end{example} 

The 
result of Proposition~\ref{recprop} 
requires the sequence $(u_n)$ to satisfy one of the 
recurrences (\ref{recjlv}) or (\ref{recanwh}), which depend on how the 
sequence $(v_n)$ is specified, for instance, imposing $v_n=u_n-1$ 
for a L\"uroth series (\ref{luroth}), or 
$v_n=u_n+1$ 
for an alternating L\"uroth series (\ref{altluroth}), as above. 
However, there is another way to obtain the strong Engel property, by imposing 
independent conditions on the sequences $(u_n)$ and $(v_n)$. 

\begin{proposition} 
Suppose that the sequences $(u_n)$ and $(v_n)$ satisfy 
\beq\label{recuv}
u_n = \bet_{n} \prod_{k=1}^{n-1} u_{k}, 
\quad v_n = \gam_{n} \prod_{k=1}^{n-1} v_{k}, \qquad n\geq 2, 
\eeq 
where 
$(\bet_n)$ and $(\gam_n)$ are arbitrary sequences of positive integers, with 
arbitrary $u_1,v_1\in\Z_{>0}$. 
Then the associated sequence $(x_n)$ defined by (\ref{xudef}) has the strong Engel property, that is 
$z_j =x_{j}/x_{j-1}^2 \in \Z$ holds for all $j\geq 2$. 
\end{proposition}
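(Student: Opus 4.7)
The plan is to verify the strong Engel property by a direct computation of $z_j=x_j/x_{j-1}^2$ from the definitions, exploiting the telescoping structure of the products defining $x_n$.

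First, I would treat the base case $j=2$ separately, since the hypothesis only specifies $u_n$ and $v_n$ in terms of $\beta_n,\gamma_n$ for $n\geq 2$. From $x_1=u_1$ and $x_2=u_2u_1v_1$, one has $z_2=u_2v_1/u_1$, and substituting the relation $u_2=\beta_2 u_1$ gives $z_2=\beta_2 v_1\in\Zp$.

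Next, for $j\geq 3$, I would write out
\beq
z_j \; = \; \frac{x_j}{x_{j-1}^2} \; = \; \frac{u_j\prod_{k=1}^{j-1}u_kv_k}{u_{j-1}^2\prod_{k=1}^{j-2}(u_kv_k)^2} \; = \; \frac{u_j\,v_{j-1}}{u_{j-1}\prod_{k=1}^{j-2}u_kv_k},
\eeq
where the simplification uses a single telescoping step to cancel one copy of each $u_kv_k$ for $k\leq j-2$ together with one factor of $u_{j-1}$. Then I would substitute the two recurrences (\ref{recuv}) to rewrite the numerator: the factor $u_j=\beta_j u_{j-1}\prod_{k=1}^{j-2}u_k$ and $v_{j-1}=\gamma_{j-1}\prod_{k=1}^{j-2}v_k$, which makes the remaining product of $u_kv_k$ ($1\leq k\leq j-2$) appear in the numerator and cancel exactly against the denominator, leaving
\beq
z_j \; = \; \beta_j\,\gamma_{j-1} \;\in\; \Zp.
\eeq

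There is essentially no serious obstacle: the argument is a single bookkeeping check, and the only mildly subtle point is recognizing that the telescoping collapses everything to the pair $(\beta_j,\gamma_{j-1})$, confirming not just integrality of the $z_j$ but an especially clean closed form. If desired, the same calculation can be phrased cleanly via the exponentiated shift operator by noting $\theta[x_n]=u_nv_n\,\theta[u_n]$ and $\theta[u_n]/\prod_{k=1}^{n-1}u_k=\beta_{n+1}/\beta_n\cdot (\text{integer factors})$ for $n\geq 2$, but the direct computation above seems shortest.
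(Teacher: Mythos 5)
Your proof is correct and follows essentially the same route as the paper's: the base case $z_2=\beta_2 v_1$ and then the telescoping computation $z_j = u_j v_{j-1}/\bigl(u_{j-1}\prod_{k=1}^{j-2}u_kv_k\bigr)=\beta_j\gamma_{j-1}$ are exactly the steps in the published argument (written there with index $j+1$). Nothing further is needed.
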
 

\begin{proof} 
We have 
$$
z_2=\frac{x_2}{x_1^2}=\frac{u_2u_1v_1}{u_1^2}=\frac{u_2v_1}{u_1}=\bet_2v_1, 
$$
while 
for $j\geq 2$ we see that 
$$ 
z_{j+1} = \frac{u_{j+1}\prod_{k=1}^ju_kv_k}{u_j^2\left(\prod_{k=1}^{j-1}u_kv_k\right)^2} 
= \frac{u_{j+1}v_j}{u_j\prod_{k=1}^{j-1} u_k v_k } 
= \beta_{j+1}\gam_j, 
$$ 
and the result follows. 
\end{proof} 

\begin{example}\label{zjisj}
Upon setting $\bet_n=n$, $\gam_n=1$ for all $n\geq 2$ and $u_1=v_1=1$, we have 
$v_n=1$ for all $n$, and we find 
$$ 
u_n=n\prod_{k=1}^{n-2}(n-k)^{2^{k-1}}, \qquad 
x_n = \prod_{k=0}^{n-2}(n-k)^{2^{k}},
$$ 
where $x_n=\prod_{k=1}^nu_k$ in this case, 
which implies that $z_n=\bet_n=n$ for $n\geq 2$. So the sequence $(u_n)$ begins 
with $1,2,6,48,2880,9953280,\ldots$, and 
$(x_n)$ begins with $1,2,12,576,1658880,16511297126400,\ldots$. The 
alternating sum $\sum_{j\geq 1}(-1)^{j-1}/x_j$ is the strong Pierce series 
\beq\label{expo2}
S'=1-\frac{1}{2}+\frac{1}{12}-\frac{1}{576}+\frac{1}{1658880}-\frac{1}{16511297126400}+\cdots, 
\eeq 
and the continued fraction expansion of $S'\approx 0.5815978250$ is 
\small 
$$ 
[0;1,1,2,1,1,3,2,2,1,1,4,2,2,2,3,1,1,2,2,5,1,1,2,1,1,3,2,2,2,4,1,1,2,2,\ldots].
$$
\normalsize 
The corresponding sequence of foldings of finite continued fractions begins 
$$ 
\begin{array}{rcl}
[1] & \mapsto & [0;1,1]=[0;2] \\ 
& \mapsto & [0;1,1,2,2] \\ 
& \mapsto & [0;1,1,2,1,1,3,2,2,1,1] \\ 
& \mapsto & [0;1,1,2,1,1,3,2,2,1,1,4,2,2,2,3,1,1,2,1,1], 
\end{array} 
$$ 
and so on, viewed as corresponding to $k=0$ in Theorem~\ref{main}, or to $k=1$ if we combine 
the first two terms so that $S'=1/x_1'+\sum_{j\geq 2}(-1)^j/x_j'=1/2 + 1/12-1/576+1/1658880-\cdots$, 
with $x_1'=2$, $x_j'=x_{j+1}$ for $j\geq 2$, and then the sequence 
of lengths is given by the formula (\ref{specengel}). 
\end{example}

\section{Irrationality exponents and transcendence} 

In this final section we compute the irrationality exponents of certain families of transcendental 
numbers defined by series of L\"uroth/alternating L\"uroth type, with arbitrary signs, that have the strong Engel 
property, before concluding with a conjecture concerning the whole family of series in Theorem~\ref{main}. 

Recall that the irrationality exponent $\mu(\xi)$ of a real number $\xi$ is defined to be the 
supremum of the set of real numbers $\mu$ such that there are infinitely many 
rational approximations $p/q$ satisfying the inequality 
$$
|\xi - p/q| < 1/q^\mu.
$$ 
For an irrational number, $\mu(\xi)\geq 2$, and 
the irrationality exponent is given in terms of $q_n$, the denominators of the convergents 
of the continued fraction expansion of $\xi$, by the formula 
\beq\label{limsup} 
\mu(\xi) = 1+ \limsup\limits_{n\to\infty} \frac{\log q_{n+1}}{\log q_n}.
\eeq 
If $\mu(\xi)>2$ then $\xi$ is transcendental, by Roth's theorem \cite{Roth}. 

\begin{theorem}\label{irratexp} 
Suppose that a number $\xi\in\R_{>0}$ is defined by either a series of the L\"uroth type 
(\ref{sluroth}) with arbitrary signs, subject to a recurrence of the form (\ref{recjlvlur}), 
or of the alternating L\"uroth type 
(\ref{saltluroth}) with arbitrary signs, subject to a recurrence of the form (\ref{recjlvaltlur}), 
where in each case $\al_n$ is given by 
\beq \label{alform} 
\al_n= \left \lceil{\exp (C\nu^n)}P(u_n,u_{n+1})\right \rceil, 
\text{ with } P(X,Y) = \sum_{i=0}^M \sum_{j=0}^N c_{ij} X^{r_i}Y^{s_j},
\eeq 
for non-negative integers $M,N$, positive real numbers $C,c_{ij},\nu$, and non-negative real exponents
$r=r_M>r_{M-1}>\cdots>r_0\geq 0$, $s=s_N>s_{N-1}>\cdots>s_0\geq 0$. Then 
$\xi$ is transcendental with irrationality exponent 
\beq\label{muf1}
\mu(\xi)=\max \left(\nu, \frac{1}{2}\Big(s+4+\sqrt{(s+4)^2+4(r-1)}\Big)\right). 
\eeq 
Similarly, if $\xi$ is defined by one of the series (\ref{sluroth}) or 
(\ref{saltluroth}), with arbitrary signs, subject to a recurrence of the form 
(\ref{recanwhlur}) or (\ref{recanwhaltlur}), respectively, with $\al_n$ as in 
(\ref{alform}), then it is transcendental 
with irrationality exponent 
\beq\label{muf2}
\mu(\xi) = \max \left(\nu, \frac{1}{2}\Big(s+2+\sqrt{(s+2)^2+4(r+1)}\Big)\right). 
\eeq 
\end{theorem}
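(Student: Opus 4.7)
The plan is to combine the explicit continued fraction from Theorem~\ref{lurthm} (respectively Theorem~\ref{altlurthm}) with the formula~(\ref{limsup}) for the irrationality exponent. By those results the $n$th partial sum $S_n$ is a convergent $p_{\ell_n}/q_{\ell_n}$ with $q_{\ell_n}=x_n$, so the computation reduces to tracking the asymptotic growth of $\log x_n$ and of the large partial quotient $z_{n+1}-1$ that the folding operator $\varphi_{z_{n+1}}^{(\eps_{n+1})}$ inserts at position $\ell_n+1$.

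First I would pin down the growth rate of $\log u_n$. Taking logarithms in (\ref{recjlvlur}) or (\ref{recanwhlur}) and using the leading asymptotic $\log\al_n=C\nu^n+r\log u_n+s\log u_{n+1}+O(1)$ (the ceiling and lower-order monomials of $P$ contribute only $O(1)$), one obtains an inhomogeneous linear recurrence
\[
\log u_{n+2}=(s+c_2)\log u_{n+1}+(r+c_1)\log u_n+C\nu^n+O(1),
\]
with $(c_1,c_2)=(-1,4)$ in the first case and $(c_1,c_2)=(1,2)$ in the second. Standard analysis then gives $\log u_n\sim D\mu^n$, where $\mu=\max(\lambda,\nu)$ and $\lambda$ is the positive root of $\lambda^2=(s+c_2)\lambda+(r+c_1)$; this is precisely the quantity inside the outer $\max$ in (\ref{muf1}) or (\ref{muf2}). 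A direct evaluation at $\lambda=2$ shows $\lambda>2$ in both cases.

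Next I would transfer this asymptotic to $\log x_n$ and $\log z_{n+1}$. From $x_n=u_n\prod_{k=1}^{n-1}u_k v_k$ with $v_k=u_k\mp 1$, the estimate $\log(u_k v_k)=2\log u_k+O(1/u_k)$ together with the geometric sum $\sum_{k<n}\mu^k\sim\mu^n/(\mu-1)$ yields
\[
\log x_n\sim D\mu^n\,\frac{\mu+1}{\mu-1},\qquad \log z_{n+1}=\log x_{n+1}-2\log x_n\sim D\mu^n\,\frac{(\mu-2)(\mu+1)}{\mu-1},
\]
so $\log z_{n+1}/\log x_n\to\mu-2$. Since Lemma~\ref{trans} gives $q_{\ell_n+1}=(z_{n+1}-1)q_{\ell_n}+q_{\ell_n-1}\sim z_{n+1}x_n$, one concludes
\[
\frac{\log q_{\ell_n+1}}{\log q_{\ell_n}}\longrightarrow 1+(\mu-2)=\mu-1,
\]
which via (\ref{limsup}) already gives the lower bound $\mu(\xi)\ge\mu$.

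For the matching upper bound I would exploit the folded structure: every partial quotient in the block $(\ell_n,\ell_{n+1}]$ other than the central $z_{n+1}-1$ is either a bounded bookkeeping entry (a $1$ introduced by $\tilde{\bf a}$) or a copy of some $z_k-1$ with $k\le n$ appearing in a reversed copy of an earlier stage. Using the monotonicity of $(\log z_k)$ and the fact that within the block the denominators $q_j$ lie between $x_n$ and $x_{n+1}=z_{n+1}x_n^2$, one derives a uniform estimate $\log q_{j+1}/\log q_j\le\mu-1+o(1)$ for $\ell_n\le j<\ell_{n+1}$, matching the lower bound and yielding $\mu(\xi)=\mu=\max(\lambda,\nu)$. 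Transcendence then follows from Roth's theorem since $\mu\ge\lambda>2$. I expect the main obstacle to be making this last step fully rigorous: converting the combinatorics of the folding operators $\varphi^{(\pm 1)}_z$ into a uniform pointwise bound on intra-block ratios. By comparison the earlier steps are essentially routine linear-recurrence bookkeeping, and a uniform treatment of the four subcases (two recurrences times the L\"uroth/alternating L\"uroth sign) is straightforward, since the sign only enters through $v_k=u_k\pm 1$ and hence only through $O(1/u_k)$ corrections.
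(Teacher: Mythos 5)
Your proposal is correct and follows essentially the same route as the paper: linearize the recurrence for $\log u_n$ to identify $\mu=\max(\nu,\lambda)$ with $\lambda$ the dominant characteristic root, observe that the folding step inserts the partial quotient $z_{n+1}-1$ right after the convergent $q_{\ell_n}=x_n$ so that $\log q_{\ell_n+1}/\log q_{\ell_n}\to 1+(\mu-2)=\mu-1$, and then check that no intra-block ratio exceeds this. The only real difference is in that last step, where the paper tracks the increments $\log(z_k-1)$ via a telescoped sum of partial quotients and needs the inequality $1+\mu^{-1}\le\mu-1$; your cruder bound (every partial quotient in the $n$th block is at most $z_{n+1}-1$ while every denominator there is at least $x_n$) actually suffices and closes the step you flagged as the main obstacle.
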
 

\begin{proof}
For the sake of simplicity, we assume that the series (\ref{sluroth}) or 
(\ref{saltluroth}) being considered is generic, in the sense described in Corollary~\ref{lencor}, 
which means imposing the requirement $u_1\geq 3$, but if this is not the case 
then the same method of proof applies 
with only minor modifications. Clearly $(u_n)$ is an increasing sequence of 
positive integers. 
Upon setting $\La_n=\log u_n$ and taking logarithms in either  
(\ref{recjlvlur}) or (\ref{recjlvaltlur}), subject to (\ref{alform}), we find 
\beq \label{linrec}
\La_{n+2}-(s+4)\La_{n+1}+(1-r)\La_n=\Delta_n, \qquad \Delta_n=C\nu^n +o(1). 
\eeq 
By applying the method of Aho and Sloane \cite{ahosloane}, 
the inhomogeneous linear equation (\ref{linrec}) can be solved ``explicitly'' to yield 
\beq\label{Laform} 
\La_n = A\la^n +B\bar{\la}^n + F_n, \qquad F_n = C'\nu^n\big(1 +o(1)\big), 
\eeq 
for certain constants $A,B,C'$, where 
\beq\label{laval1} 
\la=\frac{1}{2}\Big(s+4+\sqrt{(s+4)^2+4(r-1)}\Big)\geq 2+\sqrt{3}, 
\eeq 
and $\bar{\la}$ is the conjugate root of the characteristic quadratic for (\ref{linrec}). 
More details of the precise form of $A,B$ and $F_n$ can be found in \cite{Hone1} (see also \cite{HoneVar}), 
but are not needed here; the formula (\ref{Laform}) is not really 
an explicit solution, because $F_n$ and $A,B$ depend implicitly on the sequence $(u_n)$. Then taking
$$ 
\mu = \max (\nu,\la), 
$$ 
we see that 
\beq\label{lasy} 
\Lambda_n =D\mu^n\big(1+o(1)\big), \qquad D>0, 
\eeq 
where $D$ is either $C'$ or $A$ depending on which of $\nu$ or $\la$ is the greater. 

For what follows, an estimate of the growth of the sequence $(z_n)$ is also required. 
From Theorems \ref{lurthm} and \ref{altlurthm}, 
using (\ref{rhoform}), we have 
$$ 
\bear{rcl} 
\log z_n & = & \log u_{n-1}+2\log(u_{n-1}\mp 1)+\log\rho_{n-1}\\ 
& =& 3\La_{n-1}+\log m+\sum_{k=1}^{n-2}\log\al_k+o(1). 
\eear
$$ 
Thus we see from (\ref{alform}) and (\ref{lasy}) that 
\beq\label{zasy} 
\log z_n = D'\mu^n\big(1+o(1)\big), \qquad D'>0,
\eeq 
where the precise form of $D'$ is unimportant. 

In order to evaluate the limit in (\ref{limsup}), we now consider the three-term recurrence relation for $q_n$ 
encoded in (\ref{mat}), which is $q_{n+1}=a_{n+1}q_n +q_{n-1}$, so
$$ 
L_{n+1}-L_n = \log a_{n+1}+\log\left(1+\frac{q_{n-1}}{a_{n+1}q_n}\right), 
$$ 
where we set $L_n=\log q_n$. 
Performing the telescopic sum of the latter identity, with the initial value 
$L_0 = \log q_0 =0$, and noting that the last term on the right is at most $\log 2$, since $(q_n)$ is an increasing sequence of positive integers and $a_n\geq 1$, 
we obtain 
\beq\label{ineq} 
L_n=\sum_{j=1}^n\log a_{j}+\delta_n, \qquad 0<\delta_n<n \log 2. 
\eeq 

From the discussion before Corollary~\ref{lencor}, it is clear that the only possible values of 
the coefficients appearing in the folded continued fraction (\ref{lurcf}) (or its counterpart as 
described in Theorem~\ref{altlurthm}) are $1$, $u_1$, $u_1-1$, $u_1-2$ and $z_j-1$ for $j\geq 2$.
In an initial block of length $\ell_n=3\cdot 2^{n-1}-2$, as in (\ref{genericl}), the coefficient 
$z_n-1$ appears once, $z_{n-1}-1$ appears twice, and in general $z_j-1$ appears $2^{n-j}$ times. 
This accounts for $2^{n-1}-1$ coefficients out of $\ell_n$, while $\delta_{\ell_n}$ and the 
sum of logarithms of the remaining coefficients are both $O(2^n)$, so 
from (\ref{ineq}) we have 
$$ 
\bear{rcl} 
L_{\ell_n}& = & \sum_{j=2}^n 2^{n-j} \log (z_j-1)+O(2^n) \\[4pt] 
& = & 2^n D' \sum_{j=2}^n (\mu/2)^j \big(1+o(1)\big) + O(2^n) \\[4pt] 
& = & D' (1-2/\mu)^{-1} \mu^n \big(1+o(1)\big),
\eear 
$$ 
since $\mu>2$ by (\ref{laval1}). 
Now if $\eps_{n+1}=+1$ then folding requires an application of $\varphi^{(+1)}_{z_{n+1}}$, 
which gives $a_{\ell_{n}+1}=z_{n+1}-1$ and so 
$$ 
L_{\ell_{n}+1}=L_{\ell_n}+\log(z_{n+1}-1)+\delta_{\ell_n+1}-\delta_{\ell_n}=L_{\ell_n}+D'\mu^{n+1}\big(1+o(1)\big), 
$$ 
which gives 
\beq\label{biggest} 
\frac{L_{\ell_{n}+1}}{L_{\ell_n}}=1+\frac{\mu}{(1-2/\mu)^{-1}}+o(1)=\mu-1+o(1). 
\eeq 
Otherwise, if 
$\eps_{n+1}=-1$ then an application of $\varphi^{(-1)}_{z_{n+1}}$ 
gives $a_{\ell_{n}+1}=1$, so 
${L_{\ell_{n}+1}}/{L_{\ell_n}}=1+o(1)$, but $a_{\ell_n+2}=z_{n+1}-1$, and so 
instead ${L_{\ell_{n}+2}}/{L_{\ell_n+1}}=\mu-1+o(1)$. 
Then, by considering the sequence of coefficients until the next folding happens 
at length $\ell_{n+1}$, we may write 
$$ 
L_{\ell_n+j}=D'\mu^n(\mu +\tilde{\Delta}_{n,j})\big(1+o(1)\big)
$$ 
for $j\geq 1$ when $\eps_{n+1}=+1$, or for $j\geq 2$ when $\eps_{n+1}=-1$, 
where $0<\tilde{\Delta}_{n,j}=O(1)$ increases by an amount $\mu^{k-n}\leq 1$ each time 
the coefficient $a_{\ell_n+j}$ is equal to $z_k-1$, and otherwise remains the same. So 
there is an initial step where 
${L_{\ell_{n}+j+1}}/{L_{\ell_n+j}}=\mu-1+o(1)$, for $j=0$ or $1$ depending on whether 
$\eps_{n+1}=\pm1$, and at all subsequent steps until 
the next folding we have 
$L_{\ell_{n}+j+1}-L_{\ell_n+j}=D'\mu^n\big(\tilde{\Delta}_{n,j+1}-\tilde{\Delta}_{n,j}+o(1)\big)$ where 
$\tilde{\Delta}_{n,j+1}-\tilde{\Delta}_{n,j}\leq \mu^{k-n}$ 
for $2\leq k\leq n$, so $\tilde{\Delta}_{n,j+1}-\tilde{\Delta}_{n,j}\leq 1$. Hence, for these subsequent steps, 
$$ 
  \frac{L_{\ell_{n}+j+1}}{L_{\ell_n+j}}
  = 1 + \frac{\tilde{\Delta}_{n,j+1} - \tilde{\Delta}_{n,j} + o(1)}
  {(\mu +\tilde{\Delta}_{n,j})\big(1+o(1)\big)}, 
$$
which in the limit 
is at most $1+\mu^{-1}$, until the next 
folding happens and there is a term with limit 
$\mu-1$, obtained 
from the ratio of terms like (\ref{biggest}). Now $1+\mu^{-1}\leq \mu-1$ for 
$\mu\geq 1+\sqrt{2}$, which holds by (\ref{laval1}). Thus from (\ref{limsup}) we find 
$$ 
\mu(\xi) = 1+\limsup\limits_{n\to\infty} \frac{L_{n+1}}{L_n}=1+\mu-1=\max(\nu,\la), 
$$ 
as required. 

For the second part of the theorem, where $(u_n)$ is subject to (\ref{recanwhlur}) or (\ref{recanwhaltlur}), 
for a series of L\"uroth/alternating L\"uroth type with signs, as appropriate, then 
(\ref{linrec}) is modified to 
$$ 
\La_{n+2}-(s+2)\La_{n+1}-(1+r)\La_n = \Delta_n, \qquad \Delta_n=C\nu^n +o(1), 
$$ 
and the largest characteristic root is 
$$
\la=\frac{1}{2}\Big(s+2+\sqrt{(s+2)^2+4(r+1)}\Big)\geq 1+\sqrt{2}, 
$$
so $\mu=\max(\nu,\la)\geq 1+\sqrt{2}$ still holds, and the rest of the 
proof is the same.
\end{proof} 

\begin{remark} 
A suitable modification of the preceding argument 
should show that the number (\ref{expo2}) defined in Example \ref{zjisj} has irrationality exponent~$2$.
\end{remark}

As is well known, the set of irrational numbers with irrationality exponent greater than $2$ has measure zero. 
If $\mu(\xi)=2$ then there is no simple criterion to decide whether $\xi$ is transcendental or not. 
Nevertheless, we have reason to expect that none of the $\xi$ defined by strong Engel series with signs 
are algebraic. 

\begin{conjecture}\label{transc} 
All of the real numbers $\xi$ defined by a series of the form (\ref{scf}), for arbitrary 
$p/q\in\Q$ and positive integer parameters $z_2,z_3,\ldots$, are transcendental. 
\end{conjecture}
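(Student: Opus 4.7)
The plan is to exploit the near-palindromic combinatorial structure built into the folded continued fraction (\ref{scf}) and apply a transcendence criterion in the Adamczewski--Bugeaud style, ultimately powered by Schmidt's subspace theorem. By Theorem~\ref{main}, the $n$th folding step $\varphi_{z_{n+1}}^{(\eps_{n+1})}$ produces an initial segment of the continued fraction of $\xi$ of the shape $[a_0; W_n, z_{n+1}-1, W_n']$, in which the right-hand block $W_n'$ equals either $\tilde{W}_n^R$ or $W_n^R$ and therefore differs from $W_n^R$ only at its final letter. Iterating this yields, at positions $\ell_n \sim 3\cdot 2^{n-1}$ growing doubly exponentially, infinitely many initial blocks of the form ``prefix, single symbol, near-reverse of prefix'' -- precisely the quasi-palindromic structure that Adamczewski and Bugeaud use to force an irrational (with bounded partial quotients) to be either quadratic or transcendental.

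I would then split the analysis into three regimes according to the growth of $(z_n)$. In the bounded regime, where $(z_n)$ stays bounded, the partial quotients of $\xi$ are themselves bounded -- they are either $1$, some translate $z_j-1$, or one of the finitely many partial quotients arising from $p/q$ and their small perturbations under the $\tilde{\phantom{a}}$ operation -- so the bounded-partial-quotient palindromic transcendence criterion applies directly and forces $\xi$ to be quadratic or transcendental. The quadratic case is excluded by observing that eventual periodicity of the continued fraction would force the distinguished large partial quotients $z_j-1$ to appear at arithmetic-progression positions, whereas in fact they appear at the doubly-exponentially spaced positions $\ell_n$. In the opposite regime, when $(z_n)$ grows fast enough that $\mu(\xi)>2$ -- already a large class of series, as shown in Theorem~\ref{irratexp} -- transcendence is immediate from Roth's theorem applied to the convergents $p_{\ell_n}/q_{\ell_n}$ coming from truncation of the series.

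The main obstacle is the intermediate regime in which $(z_n)$ is unbounded but grows so slowly that $\mu(\xi)=2$: here the partial quotients are unbounded, so Roth is silent and the bounded version of the palindromic criterion does not apply. One would need a refined palindromic transcendence theorem that tolerates unbounded partial quotients under an appropriate growth hypothesis on the convergent denominators $q_n$ across the palindromic junctions. The telescoping estimate (\ref{ineq}) supplies precise asymptotic control of $\log q_{\ell_n}$ in terms of $\sum_{j=2}^{n}2^{n-j}\log(z_j-1)$, while the three-term recursion gives $q_{\ell_n+1}=(z_{n+1}-1)q_{\ell_n}+q_{\ell_n-1}$ at each junction, so in principle these estimates can be matched against the hypotheses of the unbounded extensions of the Adamczewski--Bugeaud theorem due to Bugeaud and collaborators. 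Carrying out this matching uniformly over all admissible $(z_j,\eps_j,p/q)$, and systematically excluding the quadratic case across the three regimes, is where the bulk of the work -- and any genuine technical difficulty -- would lie.
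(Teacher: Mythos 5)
The statement you are addressing is a \emph{conjecture}: the paper does not prove it, and explicitly leaves it as ``an interesting challenge for the future.'' The authors' only suggested route is entirely different from yours --- they propose viewing the strong Engel series as the value at an algebraic point of the multivariate power series (\ref{power}) and applying Mahler's method in several variables in the form developed by Loxton and van der Poorten, i.e.\ a functional-equation argument, not a Diophantine-approximation one. So there is no ``paper proof'' to match yours against; the relevant question is whether your sketch closes the conjecture. It does not, and you say so yourself: the intermediate regime, where $(z_n)$ is unbounded but grows slowly enough that $\mu(\xi)=2$, is left hanging on a ``refined palindromic transcendence theorem'' that you do not supply and that is not available off the shelf. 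That regime is not a corner case --- it contains, for instance, $z_n=n$ as in Example~\ref{zjisj} --- so the proposal is a programme, not a proof.

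Two further concrete gaps. First, the prefixes produced by $\varphi_z^{(\pm1)}$ are only quasi-palindromes: $({\bf a},z-1,\tilde{{\bf a}}^R)$ and its reversal $(\tilde{{\bf a}},z-1,{\bf a}^R)$ disagree in a block of about four letters near the centre (because $\tilde{{\bf a}}$ is one letter longer than ${\bf a}$), so the Adamczewski--Bugeaud palindrome criterion does not apply ``directly'' even in the bounded regime; you would need one of their quantitative quasi-palindrome variants and must verify its density/occurrence hypotheses for this specific defect pattern. Second, your exclusion of the quadratic alternative rests on the partial quotients $z_j-1$ being ``distinguished'' by their size and their doubly-exponentially spaced positions; this collapses when the $z_j$ are bounded and repeat (e.g.\ all $z_j=2$, so $z_j-1=1$ is indistinguishable from the ambient $1$'s), which is exactly the bounded regime where you invoke it. Ruling out eventual periodicity there requires a separate combinatorial argument about the folded word that you have not given. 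In short: a reasonable and genuinely different line of attack from the one the authors envisage, but with acknowledged and unacknowledged holes; it does not establish the conjecture.
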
 

To explain why the above conjecture is plausible, we consider the case considered in \cite{Hone2}, 
that is $p/q=1$ with all $\eps_j=+1$, when the strong Engel series for $\xi$ has the form 
\beq\label{zser}
S=1+\sum_{j=2}^\infty \frac{1}{z_2^{2^{j-2}}z_3^{2^{j-3}}\cdots z_j} 
\eeq 
(it is necessary to assume that at least one $z_j>1$ to ensure convergence). 
Suppose that we replace the first $n$ of the parameters by variables, so 
$z_{j+1}=\zeta_{j}^{-1}$ for $j=1,\ldots n$, and regard all the other $z_j$ as fixed. 
Then (\ref{zser}) becomes a 
power series 
\beq\label{power} 
S(\zeta_1,\ldots,\zeta_n) = 1+
\sum_{j=1}^\infty c_j \prod_{i=1}^{\min(j,n)}\zeta_i^{2^{j-i}},
\eeq  
for suitable coefficients $c_j$ defined in terms of $z_{n+2},z_{n+3},\ldots$, with 
$c_j=1$ for $1\leq j\leq n$. Then in principle, the series (\ref{power}) should be amenable to the techniques 
of Loxton and van der Poorten \cite{loxtonvdp}, who proved that, subject to some 
recursive systems of functional equations being 
satisfied, certain power series in several variables, with algebraic coefficients, take 
only transcendental values at algebraic points. 

The result of \cite{loxtonvdp} is a very broad 
generalization of a result of Mahler \cite{mahler}, who showed that the series 
$$ 
f(\zeta)=\sum_{n=0}^\infty \zeta^{2^n}, 
$$ 
which satisfies the functional equation 
$$ 
f(\zeta^2)=f(\zeta)-\zeta, 
$$ 
takes transcendental values at algebraic points $\al$ with $0<|\al|<1$. In particular, this 
includes the transcendence of the Kempner number and the other values of the series (\ref{kempner}) 
for integers $u\geq 2$. 

The analysis of the series (\ref{power}) by the methods of Loxton and van der Poorten, and 
a proof of the above conjecture, is an interesting challenge for the future. 

\subsection*{Acknowledgments} 
ANWH is funded by Fellowship EP/M004333/1 from the EPSRC,  
and grant IEC\textbackslash R3\textbackslash 193024 from the Royal Society. 
He thanks 
Evgeniy Zorin for pointing out the work of Loxton and van der Poorten. 
JLV is supported by grant PGC2018-096504-B-C32 from MINECO/FEDER. 


\normalsize



\end{document}